\documentclass[preprint]{imsart}

\RequirePackage[OT1]{fontenc}
\RequirePackage{amsthm,amsmath,amssymb}
\RequirePackage[numbers]{natbib}
\RequirePackage[colorlinks,citecolor=blue,urlcolor=blue]{hyperref}
\RequirePackage{graphicx}
\usepackage{mathtools}

\bibliographystyle{plainnat}

\startlocaldefs
\numberwithin{equation}{section}
\theoremstyle{plain}

\newtheorem{theorem}{Theorem}[section]
\newtheorem{lemma}{Lemma}[section]
\newtheorem{corollary}{Corollary}[section]

\newcommand{\E}{\mathbb{E}}
\newcommand{\R}{\mathbb{R}}

\DeclareMathOperator{\sgn}{sgn}
\DeclareMathOperator{\supp}{supp}

\DeclarePairedDelimiter\floor{\lfloor}{\rfloor}
\endlocaldefs

\begin{document}
\begin{frontmatter}
\title{The Accessible Lasso Models}
\runtitle{The Accessible Lasso Models}

\begin{aug}
\author{\fnms{Amir} \snm{Sepehri}\ead[label=e1]{asepehri@stanford.edu}}
\and
\author{\fnms{Naftali} \snm{Harris}\ead[label=e2]{naftali@stanford.edu}}

\address{Department of Statistics\\
Sequoia Hall\\
390 Serra Mall\\
Stanford University\\
Stanford, CA 94305-4065\\
\printead{e1,e2}}

\runauthor{Harris and Sepehri}
\affiliation{Stanford University}

\end{aug}

\begin{abstract}
A new line of research \citep{lee2013exact} on the lasso \citep{tibshirani1996regression} exploits the beautiful geometric fact that the lasso fit is the residual from projecting the response vector $y$ onto a certain convex polytope \citep{tibshirani2012degrees}. This geometric picture also allows an exact geometric description of the set of \textit{accessible} lasso models for a given design matrix, that is, which configurations of the signs of the coefficients it is possible to realize with some choice of $y$. In particular, the accessible lasso models are those that correspond to a face of the convex hull of all the feature vectors together with their negations. This convex hull representation then permits the enumeration and bounding of the number of accessible lasso models, which in turn provides a direct proof of model selection inconsistency when the size of the true model is greater than half the number of observations.
\end{abstract}

\begin{keyword}[class=MSC]
\kwd[Primary ]{62J07}
\kwd[; secondary ]{52B12}
\end{keyword}

\begin{keyword}
\kwd{lasso geometry}
\kwd{convex polytopes}
\kwd{upper bound theorem}
\kwd{model selection consistency}
\end{keyword}

\tableofcontents
\end{frontmatter}

\section{Introduction}
The lasso \cite{tibshirani1996regression} has proved a popular approach in high dimensional regression problems, where the number of variables is large relative to the number of observations. Perhaps the major reason for this is that the estimated coefficients are sparse. Indeed, as noted first by \cite{osborne2000lasso}, the number of nonzero coefficients can never exceed the number of observations.

Let us consider a \textit{signed model} to be a subset of supported variables together with the signs of their corresponding coefficients. For a fixed design matrix, it is not in fact the case that any signed model with $n$ or fewer variables could possibly be chosen by the lasso for some choice of $y$. This differs from ordinary least squares, where all such models are possible.

In this paper, we will describe and count the signed models that are \textit{accessible}, that is, which can possibly be chosen by the lasso.

More formally, for a fixed design matrix $X_0 \in \R^{n \times p}$, the lasso solves the optimization problem
\begin{equation}\label{eq:lasso_regular}
\min_{\beta_0 \in \R^p} \frac{1}{2}||y - X_0\beta_0||_2^2 + \lambda ||\beta_0||_1
\end{equation}
for fixed $\lambda > 0$.

Under the relatively weak condition that $X_0$ is in general position, \cite{tibshirani2013lasso} showed that the minimizing $\beta_0$ is unique for any choice of $y \in \R^n$; he further showed that this condition holds almost surely, for example, if $X_0$ is chosen from any continuous probability distribution on $\R^{n \times p}$. For the remainder of this work, we will suppose that $X_0$ is such that $\beta_0$ is unique for any choice of $y$, and denote this by $\beta_0(y)$.

Since we are interested in the signs of $\beta_0$, it will be notationally more convenient to solve the following equivalent problem:
\begin{equation}\label{eq:lasso_expanded}
\begin{aligned}
& \min_{\beta \in \R^{2p}} & & \frac{1}{2}||y - X\beta||_2^2 + \lambda \sum_{j=1}^{2p}\beta_j \\
& \text{subject to} & & \beta \ge 0.
\end{aligned}
\end{equation}
Here, $X = \left(X_0,\; -X_0\right)$, and $\beta = \left(\beta_0^+,\; \beta_0^-\right)$. Note that the solution to \eqref{eq:lasso_expanded} will have at most one of $\beta_j$ and $\beta_{p+j}$ being nonzero. This is because if both are positive, one can subtract a common factor from both of them, reducing the sum of the $\beta$'s without changing the residual sum of squares.

We'll formalize the notion of a signed model by letting $\supp(\beta) = \{1 \le j \le 2p:\; \beta_j > 0\}$; of course, $\supp(\beta)$ is just an alternate description of the support and signs of $\beta_0$. For any signed model $S \subset \{1, \ldots, 2p\}$, let $X_S$ consist of just the columns of $X$ given by $S$. Similarly, let $1_S$ denote a vector of length $|S|$ with all ones.

Of great interest will be the values of $y$ leading to a particular signed model; define
\begin{equation}
A_S = \{y \in \R^n :\; \supp(\beta(y)) = S\}
\end{equation}

Clearly, $\bigcup_S A_S = \R^n$, forming a partitioning of the space into disjoint regions. We then call a signed model $S \subset \{1 \le j \le 2p\}$ \textit{accessible} if $A_S \ne \emptyset$; that is, if there is some value of $y$ such that $\supp(\beta(y)) = S$.

\section{Geometry of lasso model selection}
As we will see, the set $A_\emptyset$, or the ``null model polytope'', plays a special role in the geometry of the lasso. Another set which will play an equally important role is the convex hull of the columns of $X$, which we will denote by $\mathcal{CH}(X)$. In fact, $ A_\emptyset/\lambda$ and $\mathcal{CH}(X)$ are (by definition) polar duals, since $A_\emptyset = \{y \in \R^n:\; X^Ty \le \lambda 1\}$ by the KKT conditions for \eqref{eq:lasso_regular}.

Each face of $\mathcal{CH}(X)$ has a natural correspondence to a subset of signed variables $S$; namely, the signed variables $S$ that form the vertices of that face. We denote by $F_S$ the corresponding face of $A_\emptyset$, which is characterized by the property that $X_S^Tf = \lambda 1_S$ for any $f \in F_S$.

The sets $A_S$ are then characterized by the following lemma, which we believe was described and proved first by \cite{tibshirani2012degrees} in Lemma 3:
\begin{lemma}[Determining $\supp(\beta(y))$]\label{lemma:supp_beta}
Let $P_{A_\emptyset}(y)$ be the projection of $y$ onto the null model polytope $A_\emptyset$. Let $F_S$ be the face of $A_\emptyset$ of minimal dimension such that $P_{A_\emptyset}(y) \in F_S$. Then $y \in \overline{A_S}$. Furthermore, $y - P_{A_\emptyset}(y) = X_S \beta(y)$, the lasso fit.
\end{lemma}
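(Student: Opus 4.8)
The plan is to derive the solution's Karush--Kuhn--Tucker (KKT) conditions, read the geometry of the Euclidean projection onto $A_\emptyset$ through its normal cone, and then invoke the uniqueness of $\beta(y)$ to pin down the support. The KKT conditions for \eqref{eq:lasso_expanded} state that a feasible $\beta \ge 0$ is optimal if and only if its residual $r = y - X\beta$ satisfies $X^T r \le \lambda 1$ with equality $X_j^T r = \lambda$ for every $j \in \supp(\beta)$. Equivalently, $r \in A_\emptyset$, the residual lies on the face $F_{\supp(\beta)}$, and the fit $y - r$ is a nonnegative combination of the active columns $\{X_j : j \in \supp(\beta)\}$. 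The uniqueness of $\beta(y)$ is the linchpin: it suffices to exhibit \emph{any} vector satisfying this system, and the strategy is to manufacture one directly from the projection $P := P_{A_\emptyset}(y)$.

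The key step builds this candidate. By the variational characterization of projection onto a convex set, $y - P$ lies in the normal cone $N_{A_\emptyset}(P) = \{v : \langle v, z - P\rangle \le 0 \text{ for all } z \in A_\emptyset\}$. For the polyhedron $A_\emptyset = \{z : X^T z \le \lambda 1\}$, this normal cone is generated by the outer normals of the constraints active at $P$; because $F_S$ is the \emph{minimal} face containing $P$, the point $P$ sits in its relative interior and the active set is exactly $S$, so $N_{A_\emptyset}(P) = \operatorname{cone}\{X_j : j \in S\}$. Writing $y - P = X_S c$ with $c \ge 0$ and letting $\beta^*$ be the vector supported on $S$ with coefficient vector $c$, the residual is $y - X\beta^* = P$, which lies in $A_\emptyset$ and is tight precisely on $S$. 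This is exactly the KKT system, so uniqueness forces $\beta(y) = \beta^*$. In particular the fit equals $X\beta(y) = X_S c = y - P$, which is the ``furthermore'' clause, and $\supp(\beta(y)) \subseteq S$.

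It remains to upgrade $\supp(\beta(y)) \subseteq S$ to membership in $\overline{A_S}$. When every coordinate of $c$ is strictly positive the support is exactly $S$ and $y \in A_S$. Otherwise I would perturb: taking $y_k = P + X_S c^{(k)}$ with $c^{(k)} > 0$ and $c^{(k)} \to c$ keeps $y_k - P$ in the interior of $\operatorname{cone}\{X_j : j \in S\}$, hence keeps the projection at $P$ and forces the support to be all of $S$, so each $y_k \in A_S$ and $y_k \to y$. Either way $y \in \overline{A_S}$.

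The main obstacle is the clean identification of the normal cone $N_{A_\emptyset}(P)$ with $\operatorname{cone}\{X_j : j \in S\}$ together with the bookkeeping around the closure. Two points require care: that the minimal face places $P$ in its relative interior, so the active set is exactly $S$ and no extraneous generators contaminate the cone; and that the columns $\{X_j : j \in S\}$ are linearly independent, which the general-position hypothesis guarantees and which is what makes the coefficient vector $c$ well defined and legitimizes the perturbation argument. Once the projection-to-residual dictionary is set up, the KKT verification and the uniqueness step are routine.
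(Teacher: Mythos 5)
Your proof is correct: the identification of the normal cone $N_{A_\emptyset}(P)$ with $\operatorname{cone}\{X_j : j \in S\}$ at a relative-interior point of the minimal face, the verification that the resulting candidate satisfies the KKT system, the appeal to uniqueness of $\beta(y)$, and the perturbation $c^{(k)} \to c$ to handle the closure all hold up. The paper does not prove this lemma itself --- it cites Lemma 3 of \cite{tibshirani2012degrees} --- and your argument is essentially the standard projection/normal-cone/KKT proof given there, so there is nothing substantive to contrast.
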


In the special case where $X_0$ is orthogonal, it is well known that the lasso coefficients are equal to the $\lambda$-soft-thresholded inner products between $y$ and columns of $X$. When $p = n$, this orthogonal design case corresponds to $A_\emptyset$ being a hypercube with the origin as its center. (For $p < n$, it looks more like a ``cylinder'' built over a p-dimensional hypercube). Lemma \ref{lemma:supp_beta} is the natural extension of this soft-thresholding phenomenon to general design matrices; the regularization done by the lasso is to ``remove'' the projection onto $A_\emptyset$.

In fact, Lemma \ref{lemma:supp_beta} gives a complete characterization of the sets $A_S$, as illustrated in Figure \ref{fig:polytope}, taken from \cite{naftaliharris}.

\begin{corollary}[Geometry of Partitions $A_S$]\label{cor:asgeo}
If $A_S \ne \emptyset$, then it is the equal to $F_S + \{X_S\alpha:\; \alpha > 0\} = \{f + X_S\alpha:\; f \in F_S,\; \alpha > 0\}$.
\end{corollary}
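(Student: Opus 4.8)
The plan is to prove the set equality by establishing both inclusions, in each case translating membership in $A_S$ into the stationarity (KKT) conditions for the expanded problem \eqref{eq:lasso_expanded}. Writing $r = y - X\beta(y)$ for the lasso residual, these conditions state that $X_j^T r = \lambda$ whenever $\beta_j(y) > 0$ and $X_j^T r \le \lambda$ whenever $\beta_j(y) = 0$; they are necessary and sufficient because the objective is convex, and they determine the unique solution $\beta(y)$ under our standing assumption on $X_0$. The strategy is thus to read the two defining descriptions of $A_S$ and of $F_S + \{X_S\alpha : \alpha > 0\}$ against these conditions.

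For the inclusion $A_S \subseteq F_S + \{X_S\alpha : \alpha > 0\}$, I would take $y \in A_S$, so that $\beta(y)$ is supported exactly on $S$ with strictly positive entries $\alpha := \beta(y)_S > 0$. The stationarity conditions then yield $X_S^T r = \lambda 1_S$ together with $X_{S^c}^T r \le \lambda 1_{S^c}$, which says precisely that $r \in A_\emptyset$ and that the $S$-constraints are active, i.e. $r \in F_S$. (By Lemma \ref{lemma:supp_beta} this residual is exactly $P_{A_\emptyset}(y)$, which is the geometric content of the decomposition.) Rearranging $y = r + X_S\alpha$ then exhibits $y$ as a point of $F_S + \{X_S\alpha : \alpha > 0\}$.

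For the reverse inclusion I would start from an arbitrary $y = f + X_S\alpha$ with $f \in F_S$ and $\alpha > 0$, and propose the candidate coefficient vector $\hat\beta$ defined by $\hat\beta_S = \alpha$ and $\hat\beta_{S^c} = 0$. Its residual is $y - X\hat\beta = f$; since $f \in F_S$ we have $X_S^T f = \lambda 1_S$, matching stationarity on the support, while $f \in A_\emptyset$ gives $X_{S^c}^T f \le \lambda 1_{S^c}$, matching the inactive conditions. Hence $\hat\beta$ satisfies the KKT conditions, and by uniqueness $\hat\beta = \beta(y)$; because every coordinate of $\alpha$ is strictly positive, $\supp(\beta(y)) = S$ exactly, so $y \in A_S$.

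The main point requiring care is this reverse inclusion, and specifically that the support comes out to be \emph{exactly} $S$ rather than a proper subset: this is what forces the strict inequality $\alpha > 0$ in the statement, and it is the reason the uniqueness of the lasso solution is essential. A secondary subtlety is that a point $f \in F_S$ may incidentally lie on a lower-dimensional subface, so that $X_j^T f = \lambda$ for some $j \notin S$; this causes no difficulty, since the corresponding multiplier is then zero and the KKT inequality is still satisfied, leaving the support of the constructed solution unchanged.
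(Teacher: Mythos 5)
Your proof is correct, and the forward inclusion ($A_S \subseteq F_S + \{X_S\alpha : \alpha > 0\}$) matches the paper's, which also just reads it off from Lemma \ref{lemma:supp_beta}. For the reverse inclusion, however, you take a genuinely different route. The paper never verifies directly that a point $f + X_S\alpha$ lies in $A_S$; instead it observes that since $\bigcup_T A_T = \R^n$ and each $A_T$ sits inside its candidate set $F_T + \{X_T\alpha : \alpha > 0\}$, it suffices to prove that these candidate sets are pairwise disjoint over $T$, which it does by a quadratic expansion of $\|f_S + X_S\alpha_S - f_T - X_T\alpha_T\|^2$ combined with the KKT inequalities $X_S^T f_T \le \lambda 1_S$ and $X_T^T f_S \le \lambda 1_T$ (and the fact that $X_S\alpha_S \ne X_T\alpha_T$ for distinct signed supports, which rests on general position). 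You instead exhibit the explicit candidate $\hat\beta$ supported on $S$ with $\hat\beta_S = \alpha$, check the stationarity conditions against the residual $f$, and invoke uniqueness to conclude the support is exactly $S$. Your argument is more local and self-contained: it identifies the lasso solution at every point of $F_S + \{X_S\alpha : \alpha > 0\}$ without needing the global covering fact or the disjointness computation, and your remark about $f$ possibly lying on a proper subface of $F_S$ correctly disposes of the one place where this could go wrong. What the paper's argument buys in exchange is the pairwise disjointness of the regions $F_S + \{X_S\alpha : \alpha > 0\}$ as an explicit standalone geometric fact about the induced partition of $\R^n$, which is of some independent interest but is also an immediate consequence of the corollary once either proof is complete. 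Both arguments ultimately lean on the same two ingredients (the KKT characterization of $A_\emptyset$ and its faces, and uniqueness of the lasso solution), so neither is more general than the other.
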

\begin{proof}
Suppose $y \in A_S$. Then by Lemma \ref{lemma:supp_beta}, $y - P_{A_\emptyset}(y) = X_S \beta(y)$, so $y = {A_\emptyset}(y) + X_S\beta(y)$ $\in F_S + \{X_S\alpha:\; \alpha > 0\}$. Thus, $A_S \subset F_S + \{X_S\alpha:\; \alpha > 0\}$.

Since $\bigcup_S A_S = \R^n$, and $A_S \subset F_S + \{X_S\alpha:\; \alpha > 0\}$, we need only show that the sets $\left(F_S + \{X_S\alpha:\; \alpha > 0\}\right)_S$ are disjoint.

So consider $f_S + X_S \alpha_S$ and $f_T + X_T \alpha_T$, where $f_S \in F_S$ and $\alpha_S > 0$, (respectively for $T$). Then

\begin{align*}
& ||f_S + X_S \alpha_S - (f_T + X_T \alpha_T)||^2 \\
&= ||f_S - f_T||^2 + 2\langle f_S - f_T, X_S\alpha_S - X_T\alpha_T \rangle + ||X_S\alpha_S - X_T\alpha_T||^2\\
&> 2\langle f_S - f_T, X_S\alpha_S - X_T\alpha_T \rangle,
\end{align*}
where the strict inequality is from $X_S\alpha_S \ne X_T\alpha_T$ when $S \ne T$ and $\alpha_S, \; \alpha_T > 0$. Thus,

\begin{align*}
& ||f_S + X_S \alpha_S - (f_T + X_T \alpha_T)||^2 \\
&> 2\left(\alpha_S^T(\lambda 1_S - X_S^Tf_T) + \alpha_T^T(\lambda 1_T - X_T^Tf_S) \right) \\
&\ge 0,
\end{align*}
where $X_S^Tf_T \le \lambda 1_S$ and $X_T^Tf_S \le \lambda 1_T$ (coordinate-wise) by evaluating the KKT conditions at the points $f_T$ and $f_S$ in $A_\emptyset$.

Thus, $f_S + X_S \alpha_S \ne f_T + X_T \alpha_T$, so that $F_S + \{X_S\alpha:\; \alpha > 0\}$ and $F_T + \{X_T\alpha:\; \alpha > 0\}$ are disjoint.
\end{proof}

We remark that Corollary \ref{cor:asgeo} immediately shows that the sets $A_S$ are convex and have linear boundaries. It also shows that all nonempty sets $A_S$ are unbounded, with the possible exception of $A_\emptyset$.

\begin{figure}[ht]
    \includegraphics[width=\textwidth]{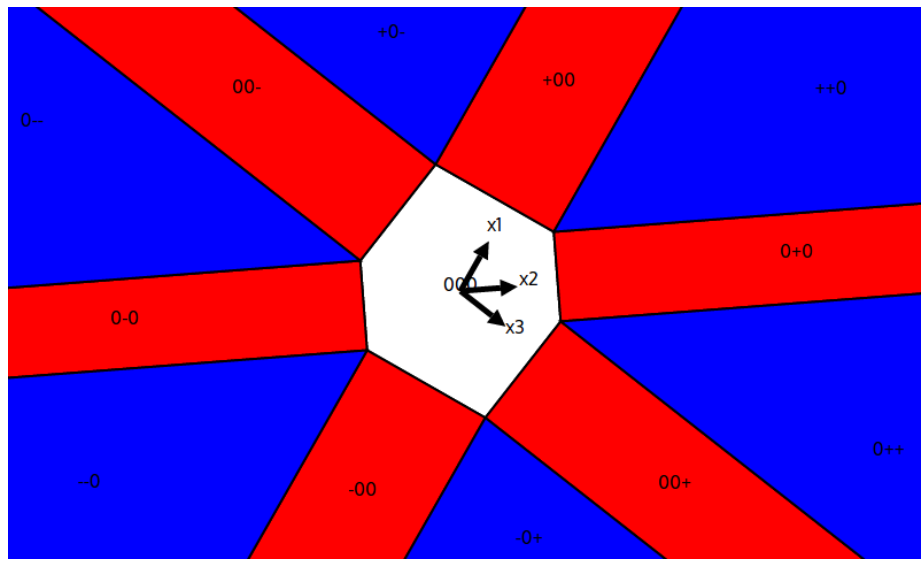}
    \caption{Example of sets $A_S \subset \R^2$ with $\lambda = 1.7$. $x_1$, $x_2$, and $x_3$ are the columns of $X_0 \in \R^{2 \times 3}$, each of which has length $1$. The middle white region is the polytope $A_\emptyset$, the red regions are the sets $A_S$ with $|S| = 1$, and the blue regions those with $|S| = 2$. Each region is labeled with $\sgn(\beta_0(y))$; for example, ``+0-'' means that the $x_1$ coefficient is positive, the $x_2$ coefficient is zero, and the $x_3$ coefficient is negative.}
    \label{fig:polytope}
\end{figure}

Now for any face $F_S$ of $A_\emptyset$, there is a point $f_S \in F_S$ such that $F_S$ is the lowest dimensional face containing $f_S$. Since $P_{A_\emptyset}(f_S + X_S\alpha) = f_S$ for any $\alpha > 0$, $S$ is in fact an accessible lasso model by Lemma \ref{lemma:supp_beta}. Thus, the set of all accessible models is determined by the faces of $A_\emptyset$, or alternatively by the faces of $\mathcal{CH}(X)$:

\begin{theorem}[Characterization of Accessible Models]\label{thm:hull_equivalence}
$A_S \ne \emptyset$ if and only if the variables $X_S$ form a face of $\mathcal{CH}(X)$.
\end{theorem}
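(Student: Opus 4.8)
The plan is to route both directions through the polar duality between $A_\emptyset/\lambda$ and $\mathcal{CH}(X)$ noted at the start of this section, using the standard inclusion-reversing bijection between the proper faces of a polytope and those of its polar dual. Under this bijection, a face of $\mathcal{CH}(X)$ whose vertex set is $\{x_i : i \in S\}$ corresponds precisely to the face $\{f \in A_\emptyset : X_S^T f = \lambda 1_S\} = F_S$ of $A_\emptyset$, which is exactly the identification already fixed in the definition of $F_S$. It therefore suffices to prove the single biconditional $A_S \ne \emptyset \iff F_S$ is a genuine (nonempty, proper) face of $A_\emptyset$; the theorem then follows by applying the duality correspondence to the right-hand side.

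For the direction $\Leftarrow$, I can reuse the paragraph immediately preceding the theorem almost verbatim: if $F_S$ is a face of $A_\emptyset$, I would choose a point $f_S$ in its relative interior, so that $F_S$ is the minimal face of $A_\emptyset$ containing $f_S$. Since $P_{A_\emptyset}(f_S + X_S \alpha) = f_S$ for every $\alpha > 0$, Lemma \ref{lemma:supp_beta} gives $f_S + X_S\alpha \in A_S$, so $A_S \ne \emptyset$.

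For the direction $\Rightarrow$, I would take any $y \in A_S$ and apply Lemma \ref{lemma:supp_beta}, which tells us that the minimal face of $A_\emptyset$ containing $P_{A_\emptyset}(y)$ is $F_S$ and that it is cut out by the tight constraints $X_S^T f = \lambda 1_S$. In particular $F_S$ is a genuine face of $A_\emptyset$ and $S$ is exactly its set of active constraints, which is the hypothesis needed to invoke the duality correspondence and conclude that $\{x_i : i \in S\}$ spans a face of $\mathcal{CH}(X)$.

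The main thing to pin down carefully is the face correspondence itself: I must check that the index set $S$ of tight constraints read off from the relative interior of a face $F_S$ of $A_\emptyset$ matches exactly the vertex set of the dual face of $\mathcal{CH}(X)$, with no spurious or missing indices. This is where the general-position assumption on $X_0$ does the work, since it guarantees that the faces are simplicial and the active-set description is unambiguous. A secondary technical point is that clean polar duality requires the origin to lie in the interior of $\mathcal{CH}(X)$, which holds when the columns of $X$ span $\R^n$; in the $p < n$ regime one must instead work inside the column space and account for the unbounded ``cylinder'' directions of $A_\emptyset$ orthogonal to it, as already flagged in the discussion of the orthogonal case.
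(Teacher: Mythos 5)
Your proposal is correct and follows essentially the same route as the paper: reduce the statement via Lemma \ref{lemma:supp_beta} to the assertion that $F_S$ is a nonempty face of $A_\emptyset$ with active set $S$, then transfer to $\mathcal{CH}(X)$ by polar duality, with the $\Leftarrow$ direction handled exactly as in the paragraph preceding the theorem. The only difference is that you spell out the face correspondence and the interior-point caveat for $p<n$ more carefully than the paper's two-sentence proof does, which is a reasonable elaboration rather than a new argument.
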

\begin{proof}
From Lemma \ref{lemma:supp_beta}, $A_S \ne \emptyset$ if and only if the variables $X_S$ is the KKT-active set corresponding to the face $F_S$ of $A_\emptyset$. By the polar duality between $\lambda A_\emptyset$ and $\mathcal{CH}(X)$, this occurs if and only if the variables $X_S$ form a face of $\mathcal{CH}(X)$.
\end{proof}

We feel this is an intuitively believable result: The lasso attempts to construct a parsimonious approximation of $y$ using a positive linear combination of some of the columns of $X$. If it selected a subset of columns of $X$ that did not generate a face of $\mathcal{CH}(X)$, then intuitively it should be easier to generate a more parsimonious approximation using variables that do form a face of $\mathcal{CH}(X)$.

\section{Bounding the number of accessible lasso models}
Having established that the accessible models correspond to faces of $\mathcal{CH}(X)$, we can now apply classical results from polytope theory to count and bound the number of accessible models. The main result in this field is the celebrated upper bound theorem of \cite{mcmullen1970maximum}, which provides a tight upper bound on the number of faces of dimension $k$ of a polytope in $d$ dimensions.

To introduce the upper bound theorem, we will need the notion of a \textit{cyclic polytope}: A cyclic polytope with $v$ vertices in $d$ dimensions is the convex hull of $v$ points $\{(t_i, t_i^2, \ldots, t_i^d): i \in \{1, \ldots, v\}\}$, (and $t_i \ne t_j$ for any $i$, $j$).

\begin{theorem}[Upper Bound Theorem]\label{thm:upper_bound}
A convex polytope with $v$ vertices in $d$ dimensions has no more than $f_{k}(d, v)$ faces of dimension $k$, where $f_{k}(d, v)$ is the number of faces of dimension $k$ of a cyclic polytope with $v$ vertices in $d$ dimensions.

In particular, $f_{k}(d, v) = \binom{d}{k+1}$ for $0 \le k < \lfloor \frac{d}{2} \rfloor$, and in general is

\begin{equation}\label{eq:fk}
f_k(d, v) = \frac{v - \delta(v-k-2)}{v-k-1}\sum_{j=0}^{\lfloor d/2 \rfloor}\binom{v-1-j}{k+1-j}\binom{v-k-1}{2j-k-1+\delta},
\end{equation}
where $\delta$ is the parity of $d$.
\end{theorem}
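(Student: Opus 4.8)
The statement is the classical Upper Bound Theorem, and the plan is to follow McMullen's route through simplicial polytopes and the $h$-vector. The first step is a reduction: it suffices to prove the bound for \emph{simplicial} polytopes, those all of whose proper faces are simplices. Given an arbitrary $d$-polytope $P$ with $v$ vertices, one may perturb its vertices into general position to obtain a simplicial polytope $P'$ with the same number of vertices; any non-simplicial face of $P$ is subdivided under the perturbation, so $f_k(P') \ge f_k(P)$ for every $k$. Since the cyclic polytope is itself simplicial, the maximum of $f_k$ over all $d$-polytopes with $v$ vertices is attained on a simplicial polytope, and we lose nothing by assuming $P$ simplicial henceforth.

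Next I would pass from the $f$-vector $(f_{-1}, f_0, \dots, f_{d-1})$, with the convention $f_{-1} = 1$, to the $h$-vector, defined by the polynomial identity $\sum_{i=0}^{d} h_i\, x^{d-i} = \sum_{i=0}^{d} f_{i-1}\,(x-1)^{d-i}$. This is an invertible linear change of coordinates, so bounding all the $f_k$ is equivalent to bounding all the $h_i$. Two facts about the $h$-vector of a simplicial $d$-polytope drive the proof. First, the Dehn--Sommerville relations, which follow from Euler's relation applied to the boundary sphere and to the links of its faces, give the symmetry $h_i = h_{d-i}$ for all $i$; this means it is enough to control $h_i$ for $0 \le i \le \lfloor d/2\rfloor$.

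The heart of the argument is the bound $h_i \le \binom{v-d+i-1}{i}$ for $0 \le i \le \lfloor d/2\rfloor$. Here I would invoke shellability of the boundary complex: by the Bruggesser--Mani line-shelling theorem the facets of a simplicial polytope can be ordered so that each is attached to its predecessors along a pure codimension-one subcomplex. A shelling gives a combinatorial reading of $h_i$ as the number of facets whose unique minimal new face (its ``restriction'') has exactly $i$ vertices, and counting these restrictions against the $v$ available vertices yields the stated binomial bound. Combined with $h_i = h_{d-i}$, this bounds the entire $h$-vector, hence every $f_k$, from above. I expect this shelling step---producing the shelling and extracting the bound on $h_i$---to be the main obstacle, as it is where the real combinatorial content lives.

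Finally I would verify that the cyclic polytope attains these bounds and convert back. The cyclic polytope $C(v,d)$ is $\lfloor d/2\rfloor$-neighborly: every set of at most $\lfloor d/2\rfloor$ of its vertices spans a face, which one checks from the Gale evenness condition describing its facets (equivalently, from the Vandermonde structure of the moment curve). Neighborliness forces $h_i = \binom{v-d+i-1}{i}$ in the range $0 \le i \le \lfloor d/2\rfloor$, so $C(v,d)$ simultaneously maximizes every $h_i$ and therefore every $f_k$. Inverting the $h$-to-$f$ relation on the cyclic polytope's $h$-vector produces the explicit formula \eqref{eq:fk}, while for small $k$ the $\lfloor d/2\rfloor$-neighborliness makes every $(k+1)$-subset of vertices a face, giving the stated closed form directly.
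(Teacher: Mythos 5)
The paper itself offers no proof of this statement: it is quoted as McMullen's classical Upper Bound Theorem, with the explicit formula \eqref{eq:fk} attributed to the Dehn--Sommerville equations via Theorem 3 of Fukuda's FAQ. So there is no in-paper argument to compare against, and your proposal has to stand on its own as a sketch of the classical proof. Your outline is the standard one (reduce to simplicial polytopes by pulling/perturbation, pass to the $h$-vector, use Dehn--Sommerville symmetry $h_i = h_{d-i}$, bound $h_i$ for $i \le \lfloor d/2 \rfloor$, and check that neighborliness forces equality for the cyclic polytope), and those surrounding steps are all correct as stated.

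The genuine gap is exactly where you suspect it, but it is worth being precise about why your stated justification fails. You claim $h_i \le \binom{v-d+i-1}{i}$ follows from the shelling interpretation of $h_i$ as the number of facets whose restriction face has $i$ vertices, ``counting these restrictions against the $v$ available vertices.'' That count only shows the restrictions are distinct $i$-subsets of a $v$-element set, giving $h_i \le \binom{v}{i}$, which is strictly weaker and not enough to prove the theorem. McMullen's actual argument establishes the recursion $h_{i+1} \le \frac{v-d+i}{i+1}\, h_i$, obtained by summing $h$-vectors over vertex links via the identity $\sum_{w} h_i(\mathrm{lk}\, w) = (i+1)h_{i+1} + (d-i)h_i$ and proving $h_i(\mathrm{lk}\, w) \le h_i(\partial P)$ for each vertex $w$ (which itself needs that a line shelling restricts to a shelling of the star of $w$). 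Iterating from $h_0 = 1$ then yields $\binom{v-d+i-1}{i}$. Without this link/recursion step the proof does not close. One further small point: your neighborliness argument for small $k$ correctly yields $\binom{v}{k+1}$ faces of dimension $k$; the $\binom{d}{k+1}$ printed in the theorem statement appears to be a typo in the paper, so do not contort your argument to match it.
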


The equation for $f_k(d, v)$ derives from the Dehn-Sommerville equations, and may be found, for example, in Theorem 3 of \cite{fukuda2000frequently}.

The upper bound theorem immediately allows us to bound the number of accessible lasso models of size $k$ (size of a model is the number of non-zero coefficients):
\begin{theorem}
The total number of accessible lasso models of size $k$ is no more than $f_{k-1}(n,2p)$.
\end{theorem}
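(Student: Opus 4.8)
The plan is to combine Theorem \ref{thm:hull_equivalence} with the Upper Bound Theorem. By Theorem \ref{thm:hull_equivalence}, an accessible lasso model of size $k$ is exactly a set $S$ with $|S| = k$ such that $X_S$ forms a face of $\mathcal{CH}(X)$; equivalently, the accessible models of size $k$ are in bijection with the faces of $\mathcal{CH}(X)$ having exactly $k$ vertices. So the task reduces to bounding the number of such faces.

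First I would argue that, under the general position hypothesis on $X_0$ (which is what guarantees the uniqueness assumed throughout, and which holds generically), the polytope $\mathcal{CH}(X)$ is simplicial: any $k \le n$ of the signed columns are affinely independent, so every proper face is a simplex. Consequently a face has exactly $k$ vertices if and only if it has dimension $k-1$, and the accessible models of size $k$ are in bijection with the $(k-1)$-dimensional faces of $\mathcal{CH}(X)$. I would then apply the Upper Bound Theorem (Theorem \ref{thm:upper_bound}). Since $\mathcal{CH}(X)$ is the convex hull of the $2p$ columns of $X = (X_0,\,-X_0)$ inside $\R^n$, it is a polytope with at most $2p$ vertices and dimension at most $n$, so the number of its $(k-1)$-dimensional faces is at most $f_{k-1}(n, 2p)$, which is the desired bound.

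The step requiring the most care is the reduction to the extremal parameters $(d, v) = (n, 2p)$. Because a short column can have its $\pm$ pair absorbed into the convex hull of the longer ones, $\mathcal{CH}(X)$ may have strictly fewer than $2p$ vertices, and when $\mathrm{rank}(X_0) < n$ its dimension is strictly less than $n$. Applying the Upper Bound Theorem to $\mathcal{CH}(X)$ directly yields only $f_{k-1}(d', v')$ with $d' \le n$ and $v' \le 2p$. To reach the stated bound I would invoke monotonicity of $f_{k-1}(d, v)$, namely that it is nondecreasing in both the dimension $d$ and the vertex count $v$; this follows from the explicit formula \eqref{eq:fk} together with the neighborliness of cyclic polytopes (for $k-1 < \lfloor d/2\rfloor$ one has $f_{k-1} = \binom{v}{k}$, and the facet counts of a lower-dimensional cyclic polytope on the same vertices are a subcollection of the $k$-subsets). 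This monotonicity, and the verification that it persists into the range $k-1 \ge \lfloor n/2\rfloor$, is the main obstacle; everything else is bookkeeping.
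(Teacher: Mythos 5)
Your proposal is correct and follows essentially the same route as the paper: identify size-$k$ accessible models with $(k-1)$-dimensional faces of $\mathcal{CH}(X)$ via Theorem \ref{thm:hull_equivalence} and then invoke the Upper Bound Theorem with parameters $(d,v)=(n,2p)$. The only difference is that you make explicit two points the paper elides --- simpliciality under general position (so that $k$-vertex faces are exactly the $(k-1)$-dimensional ones) and monotonicity of $f_{k-1}(d,v)$ when the hull has fewer than $2p$ vertices or dimension below $n$ --- both of which are legitimate and correctly handled.
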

\begin{proof}
By Theorem \ref{thm:hull_equivalence}, the accessible lasso models of size $k$ are exactly those whose associated feature vectors form a $(k-1)$-dimensional face of $\mathcal{CH}(X)$. $\mathcal{CH}(X)$ is a polytope in $n$ dimensions with at most $2p$ vertices, so by Theorem \ref{thm:upper_bound} it has no more than $f_{k-1}(n,2p)$ $(k-1)$-dimensional faces.
\end{proof}

By summing over McMullen's upper bound for faces of each dimension, we can also bound the total number of faces of a convex polytope of all dimensions, and hence, the total number of accessible lasso models:

\begin{lemma}\label{lem:total_faces}
For $v \ge 2d$, the total number of faces of a convex polytope with $v$ vertices in dimension $d$ is bounded by
\begin{equation}
2e(d+1)\left(\frac{2e(v-\lfloor d/2 \rfloor))}{\lceil d/2 \rceil}\right)^{\lfloor d/2 \rfloor}.
\end{equation}
\end{lemma}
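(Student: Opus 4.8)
The plan is to pass to the extremal cyclic polytope via Theorem~\ref{thm:upper_bound} and then estimate the resulting sum of binomial coefficients. By the Upper Bound Theorem the number of $k$-dimensional faces of any $d$-polytope with $v$ vertices is at most $f_k(d,v)$, so the total number of faces is at most $\sum_k f_k(d,v)$. The faces being counted have dimensions $-1,0,\dots,d-1$ (the empty face together with the proper faces, i.e.\ the accessible models of sizes $0,1,\dots,d$), so there are at most $d+1$ nonzero terms and the total is at most $(d+1)\max_k f_k(d,v)$. This produces the leading factor $d+1$ and reduces the problem to the single-dimension estimate $f_k(d,v)\le 2e\left(\frac{2e(v-\floor{d/2})}{\ceil{d/2}}\right)^{\floor{d/2}}$.

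To bound one $f_k(d,v)$ I would split at $k=\floor{d/2}$. For $k<\floor{d/2}$ the cyclic polytope is $\floor{d/2}$-neighborly, so every $(k+1)$-subset of vertices spans a face and $f_k(d,v)=\binom{v}{k+1}\le\binom{v}{\floor{d/2}}$, using $v\ge 2d$ so that the central coefficient dominates. For $k\ge\floor{d/2}$ I would work from the closed form~\eqref{eq:fk}. Its rational prefactor is at most $2$ when $v\ge 2d$ (since then $v-k-1\ge v-d\ge d$), and the crucial structural observation is that the two lower indices of the paired binomials satisfy $(k+1-j)+(2j-k-1+\delta)=j+\delta\le\ceil{d/2}$. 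Hence by the elementary inequality $\binom{A}{a}\binom{B}{b}\le\binom{A+B}{a+b}$ each summand collapses to a single binomial coefficient $\binom{2v-k-j-2}{j+\delta}$ whose lower index never exceeds $\ceil{d/2}$; this is precisely what is responsible for the exponent $\floor{d/2}$ in the final bound. Because $v\ge 2d$ keeps all these coefficients on their increasing branch, the summand is largest at $j=\floor{d/2}$ and the terms decay geometrically away from it, so the inner sum is at most a fixed constant multiple of its top term rather than $\floor{d/2}+1$ times it.

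The remaining step is cosmetic rearrangement. Applying $\binom{N}{M}\le(eN/M)^M$ to the surviving binomial coefficient turns it into an expression of the shape $\left(\frac{e\,(\text{linear in }v)}{\ceil{d/2}}\right)^{\floor{d/2}}$, and the spare factor $2^{\floor{d/2}}$ hidden inside the target's constant $2e$ comfortably absorbs both the bounded prefactor and any leftover constant from the geometric sum (indeed $\floor{d/2}+1\le 2^{\floor{d/2}}$). Monotonicity of the bound under $v\ge 2d$ then lets me replace numerator and denominator by $v-\floor{d/2}$ and $\ceil{d/2}$ in the directions that only enlarge the right-hand side, matching the stated form; multiplying the per-dimension bound by the $d+1$ dimension classes yields $2e(d+1)\left(\frac{2e(v-\floor{d/2})}{\ceil{d/2}}\right)^{\floor{d/2}}$.

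I expect the real work to lie in the parity bookkeeping of the middle paragraph. The variable $\delta$ makes the odd- and even-$d$ cases behave slightly differently: the Vandermonde step lands on lower index $j+\delta$, which for odd $d$ can reach $\ceil{d/2}=\floor{d/2}+1$, one more than the target exponent, so recovering the exact exponent $\floor{d/2}$ requires either a more careful pairing of the two binomials or exploiting the extra decay supplied by the prefactor $\frac{k+2}{v-k-1}$ present in the odd case. Checking uniformly in $k$ that $v\ge 2d$ genuinely places every coefficient on its increasing branch, and tracking the constants closely enough that they collapse to exactly $2e$ inside and $2e(d+1)$ outside, is the part I would write out most carefully.
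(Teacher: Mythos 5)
Your overall architecture is genuinely different from the paper's: you bound the total face count by $(d+1)\max_k f_k(d,v)$ and then estimate each $f_k$ separately, collapsing the paired binomials in \eqref{eq:fk} via $\binom{A}{a}\binom{B}{b}\le\binom{A+B}{a+b}$. The paper instead keeps the double sum over $k$ and $j$ intact, rewrites the summand so that the sum over $k$ can be taken \emph{first} and evaluated exactly ($\sum_k\binom{j+\delta}{k-j+1}=2^{j+\delta}$), and is then left with a single sum $\sum_j\binom{v-j}{v-2j-\delta}2^{j+\delta}$ whose terms are provably nondecreasing for $v\ge 2d$, so the whole sum is at most $\lfloor d/2\rfloor$ times its final term; that factor $\lfloor d/2\rfloor$, not the number of dimension classes, is what becomes the $(d+1)$ in the stated bound.

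The genuine gap in your version is the step you wave at with ``geometric decay.'' For the raw summands $a_j=\binom{v-1-j}{k+1-j}\binom{v-k-1}{2j-k-1+\delta}$ the maximum is \emph{not} in general at $j=\lfloor d/2\rfloor$ when $v$ is as small as $2d$: for $d=20$, $v=40$, $k=10$, $\delta=0$ the maximum sits at $j=9$ and the sum already exceeds the $j=10$ term by a factor of more than $5$; since the ratio $a_{j+1}/a_j$ passes through $1$ near the maximizer, there is no uniform geometric decay and the discrepancy grows with $d$. Falling back on (number of terms) times (largest term) costs an extra factor of order $\lfloor d/2\rfloor$, and your constant accounting cannot pay for it: the ``spare $2^{\lfloor d/2\rfloor}$'' you invoke is already spent, because the Vandermonde collapse yields an upper index $2v-k-j-2\approx 2(v-\lfloor d/2\rfloor)$ for the dominant term, so $\binom{N}{M}\le(eN/M)^M$ consumes the full $2e$ in the numerator of the target with nothing left over. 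Combined with the odd-$d$ exponent mismatch, which you correctly flag but do not resolve (the paper resolves it exactly as you guess: the prefactor $\frac{k+2}{v-k-1}\le\frac{2(d+1)}{v}$ eats one factor of $\frac{2e(v-\lfloor d/2\rfloor)}{\lceil d/2\rceil}$), the argument as written does not land within the stated constant $2e(d+1)$. Exchanging the order of summation before doing any estimation, as the paper does, is the missing idea that makes both problems disappear at once.
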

\begin{proof}
Summing \eqref{eq:fk} over $k$ gives a tight upper bound on the total number of faces of all dimensions. Since the terms in the sum \eqref{eq:fk} are nonzero only if $j \ge k/2$, we may drop all other terms in that sum:

\begin{align*}
&\sum_{k=0}^{d-1}f_{k}(d, v) \\
&\le \sum_{k=0}^{d-1}\frac{v - \delta(v-k-2)}{v-k-1}\sum_{j \ge k/2}^{\lfloor d/2 \rfloor}\binom{v-1-j}{k+1-j}\binom{v-k-1}{2j-k-1+\delta} \\
&=\sum_{k=0}^{d-1}\frac{v - \delta(v-k-2)}{v-k-1}\sum_{j \ge k/2}^{\lfloor d/2 \rfloor}\binom{v-j}{v-k-1}\frac{v-k-1}{v-j}\binom{v-k-1}{v-2j-\delta} \\
&\le\sum_{k=0}^{d-1}\frac{v - \delta(v-k-2)}{v-\lfloor d/2 \rfloor}\sum_{j \ge k/2}^{\lfloor d/2 \rfloor}\binom{v-j}{v-k-1}\binom{v-k-1}{v-2j-\delta} \\
&\le\frac{v - \delta(v-d-1)}{v-\lfloor d/2 \rfloor}\sum_{k=0}^{d-1}\sum_{j \ge k/2}^{\lfloor d/2 \rfloor}\binom{v-j}{v-k-1}\binom{v-k-1}{v-2j-\delta} \\
&=\frac{v - \delta(v-d-1)}{v-\lfloor d/2 \rfloor}\sum_{k=0}^{d-1}\sum_{j \ge k/2}^{\lfloor d/2 \rfloor}\binom{v-j}{v-2j-\delta}\binom{j+\delta}{k-j+1}, \\
\end{align*}
using the identity $\binom{r}{s}\binom{s}{t} = \binom{r}{t}\binom{r-t}{r-s}$. Exchanging the order of summation gives
\begin{align*}
\ldots&=\frac{v - \delta(v-d-1)}{v-\lfloor d/2 \rfloor}\sum_{j=0}^{\lfloor d/2 \rfloor}\binom{v-j}{v-2j-\delta}\sum_{k=0}^{2j}\binom{j+\delta}{k-j+1}, \\
&=\frac{v - \delta(v-d-1)}{v-\lfloor d/2 \rfloor}\sum_{j=0}^{\lfloor d/2 \rfloor}\binom{v-j}{v-2j-\delta}2^{j+\delta}.
\end{align*}
For $v \ge 2d$, the terms in the sum are nondecreasing in $j$, so we may bound the sum by
\begin{align*}
\ldots&\le\frac{v - \delta(v-d-1)}{v-\lfloor d/2 \rfloor}\lfloor d/2 \rfloor \binom{v-\lfloor d/2 \rfloor}{v-d}2^{\lceil d/2 \rceil} \\
&\le\frac{v - \delta(v-d-1)}{v-\lfloor d/2 \rfloor}\lfloor d/2 \rfloor \left(\frac{(v-\lfloor d/2 \rfloor)e}{\lceil d/2 \rceil}\right)^{\lceil d/2 \rceil} 2^{\lceil d/2 \rceil} \\
&\le 2e(d+1)\left(\frac{2e(v-\lfloor d/2 \rfloor))}{\lceil d/2 \rceil}\right)^{\lfloor d/2 \rfloor}, \\
\end{align*}
using the fact that $\binom{n}{n-k} \le \left(\frac{ne}{k}\right)^k$ in the first inequality and $v < 2(v-d/2)$ in the last inequality, (for even $d$).
\end{proof}

Now, the number of accessible lasso models is exactly equal to the number of non-empty faces of $\mathcal{CH}(X)$, a convex polytope in $n$ dimensions with at most $2p$ vertices. Thus, the following corollary is immediate:
\begin{corollary}
For $p \ge n$, the total number of accessible lasso models is bounded by
\begin{equation*}
2e(n+1)\left(\frac{2e(2p-\lfloor n/2 \rfloor))}{\lceil n/2 \rceil}\right)^{\lfloor n/2 \rfloor}.
\end{equation*}
\end{corollary}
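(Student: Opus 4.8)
The plan is to combine the geometric characterization of accessible models with the face-count bound already proved in Lemma \ref{lem:total_faces}, reducing the corollary to a routine substitution of parameters. No new ideas are required beyond correctly identifying $d$ and $v$ and checking the lemma's hypothesis.

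First I would invoke Theorem \ref{thm:hull_equivalence}: a signed model $S$ is accessible if and only if the columns $X_S$ span a face of $\mathcal{CH}(X)$. Consequently the total number of accessible lasso models equals the number of faces of $\mathcal{CH}(X)$, and it suffices to bound the latter. The polytope $\mathcal{CH}(X)$ lives in $\R^n$, and since $X = (X_0, -X_0)$ has $2p$ columns, it has at most $2p$ vertices. I would then set $d = n$ and $v = 2p$ in Lemma \ref{lem:total_faces}: its hypothesis $v \ge 2d$ becomes $2p \ge 2n$, i.e.\ exactly the standing assumption $p \ge n$, so the lemma applies. Substituting these values into the bound $2e(d+1)\bigl(2e(v-\lfloor d/2\rfloor)/\lceil d/2\rceil\bigr)^{\lfloor d/2\rfloor}$ produces precisely the stated expression, which finishes the argument.

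Because Lemma \ref{lem:total_faces} is phrased for a polytope with \emph{exactly} $v$ vertices whereas $\mathcal{CH}(X)$ may have fewer, the one point deserving care is monotonicity: McMullen's quantity $f_k(d,v)$ is nondecreasing in $v$, so a polytope on at most $2p$ vertices has no more $k$-faces than the extremal polytope on $2p$ vertices, and the summed bound continues to dominate. A second, purely bookkeeping point is that $\sum_{k=0}^{d-1} f_k(d,v)$ tallies only the proper faces of dimensions $0$ through $n-1$, so that the null model and any improper-face conventions contribute at most an additive constant, which is comfortably absorbed into the $2e(n+1)$ prefactor. Beyond these two remarks the result is genuinely immediate, and I do not anticipate a real obstacle in carrying it out.
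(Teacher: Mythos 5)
Your proposal is correct and follows exactly the paper's route: identify accessible models with faces of $\mathcal{CH}(X)$ via Theorem \ref{thm:hull_equivalence}, then apply Lemma \ref{lem:total_faces} with $d=n$ and $v=2p$, whose hypothesis $v \ge 2d$ is precisely $p \ge n$. Your two additional remarks (monotonicity of $f_k(d,v)$ in $v$ for polytopes with fewer than $2p$ vertices, and the accounting for the empty model) are sensible points of care that the paper glosses over, but they do not change the argument.
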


In the setting where $p \sim \rho n$, this bound is
\begin{equation*}
O\left(n\left(4e(2\rho - \frac{1}{2})\right)^{n/2}\right).
\end{equation*}

Naively, for $n \le p$, one would bound the total number of lasso models by $\sum_{k = 0}^{n}2^k \binom{p}{k}$. In the setting $p \sim \rho n$, this is at least
\begin{align*}
2^n \binom{p}{n} &\approx 2^n \left(\frac{pe}{n}\right)^n \\
                 &\approx \left(2 \rho e\right)^n.
\end{align*}

Our bound gives $\left(\sqrt{4e(2\rho - \frac{1}{2})}\right)^n < \left(\sqrt{8\rho e}\right)^n \ll \left(2 \rho e\right)^n$, cutting down the number of models exponentially from the naive bound.

$\mathcal{CH}(X)$ is no ordinary convex polytope; it also exhibits symmetry through the origin. One might expect this special structure to improve the bound on the number of faces. However, results of \cite{barvinok2009centrally} provide evidence suggesting that dramatic improvements to the upper bound theorem for centrally symmetric polytopes are unlikely.

\section{Model selection inconsistency}
The bound on the number of accessible lasso models implies that correctly selecting large true models is impossible: Suppose we have a sequence of lasso problems of the form \eqref{eq:lasso_regular}, with $n \to \infty$, and $p_n/n \to \rho$. Suppose that for each $n$, there is a ``true model'' $S_n$, of size $k_n = |S_n|$, with $k_n/n \to \kappa$.

\begin{theorem}\label{thm:inconsistency}
Suppose, for each $n$, that the entries of $X_0$ are distributed iid according to some symmetric distribution on $\R$. Then if $\kappa > \frac{1}{2}$, for any $\rho > \frac{\kappa}{\kappa - \frac{1}{2}}$ we have
$P(A_{S_n} = \emptyset) \to 1$.
\end{theorem}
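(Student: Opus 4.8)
The plan is to combine the geometric characterization of accessible models in Theorem~\ref{thm:hull_equivalence} with a first–moment argument, in which the symmetry of the design supplies the averaging and the upper bound theorem (Theorem~\ref{thm:upper_bound}) supplies the ceiling. First I would reduce to a statement about faces: by Theorem~\ref{thm:hull_equivalence}, $A_{S_n}\ne\emptyset$ if and only if the signed feature vectors indexed by $S_n$ span a $(k_n-1)$-dimensional face of $\mathcal{CH}(X)$, so it suffices to prove $\prob(X_{S_n}\text{ is a face})\to 0$. The case $\kappa>1$ is trivial, since then $k_n>n$ eventually and no model of size exceeding $n$ is accessible; hence I may assume $\tfrac12<\kappa\le 1$, in which range $\rho>\kappa/(\kappa-\tfrac12)>1\ge\kappa$, so in particular $p_n>k_n$ and size-$k_n$ signed models exist.

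The second step converts a statement about the fixed model $S_n$ into one about the average model. Because the entries of $X_0$ are i.i.d.\ and symmetric, the law of $X_0$ is invariant under permuting its columns and under flipping the sign of any column; since $X=(X_0,-X_0)$, each such operation merely relabels the $2p_n$ columns of $X$ and so fixes $\mathcal{CH}(X)$ as a set. This signed-permutation group acts transitively on the signed models of a given size, so $q_{k_n}:=\prob(X_S\text{ is a face})$ is the same for every signed model $S$ with $|S|=k_n$, and in particular equals $\prob(X_{S_n}\text{ is a face})$. Writing $N_{k_n}$ for the number of accessible models of size $k_n$ and summing over the $2^{k_n}\binom{p_n}{k_n}$ signed models of that size,
\[
\E[N_{k_n}]=2^{k_n}\binom{p_n}{k_n}\,q_{k_n},\qquad\text{so}\qquad \prob(X_{S_n}\text{ is a face})=\frac{\E[N_{k_n}]}{2^{k_n}\binom{p_n}{k_n}}.
\]
Since $\mathcal{CH}(X)$ is a polytope in $\R^n$ with at most $2p_n$ vertices, Theorem~\ref{thm:upper_bound} gives the deterministic ceiling $N_{k_n}\le f_{k_n-1}(n,2p_n)$, whence
\[
\prob(A_{S_n}\ne\emptyset)\le\frac{f_{k_n-1}(n,2p_n)}{2^{k_n}\binom{p_n}{k_n}}.
\]

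It then remains to show this ratio tends to $0$, which is where the hypotheses on $\kappa$ and $\rho$ enter and which I expect to be the main obstacle. I would specialize the explicit expression \eqref{eq:fk} to $f_{k_n-1}(n,2p_n)$ and locate its dominant term. Since $\rho>1$ gives $2p_n\ge 2n$, the summands in \eqref{eq:fk} are nondecreasing in $j$, by an estimate analogous to the one in the proof of Lemma~\ref{lem:total_faces}, so up to a factor polynomial in $n$ the sum is controlled by its top term $j=\floor{n/2}$, yielding for even $n$
\[
f_{k_n-1}(n,2p_n)\le \mathrm{poly}(n)\binom{2p_n-1-n/2}{k_n-n/2}\binom{2p_n-k_n}{\,n-k_n}.
\]
Applying Stirling to this bound and to $2^{k_n}\binom{p_n}{k_n}$ and passing to exponential growth rates, the claim reduces to the inequality
\[
\phi\!\left(2\rho-\tfrac12,\,\kappa-\tfrac12\right)+\phi\!\left(2\rho-\kappa,\,1-\kappa\right)<\kappa\log 2+\phi(\rho,\kappa),
\]
where $\phi(a,b)=a\log a-b\log b-(a-b)\log(a-b)$ is the exponential rate of $\binom{an}{bn}$.

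The heart of the matter is that $\kappa>\tfrac12$ makes the denominator $\binom{p_n}{k_n}$ grow in $p_n$ like $p_n^{\,k_n}$, which outpaces the face count $f_{k_n-1}(n,2p_n)$, growing in $p_n$ only like $(2p_n)^{n/2}$; the quantitative threshold at which this dominance sets in is what the hypothesis $\rho>\kappa/(\kappa-\tfrac12)$ encodes. I would verify that this condition is sufficient for the displayed entropy inequality, using continuity in $(\rho,\kappa)$ so that the limits $p_n/n\to\rho$ and $k_n/n\to\kappa$ may be replaced by their limiting values with negligible error. Carrying out this estimate carefully, and checking that the parity corrections for odd $n$ and the polynomial prefactors do not disturb the exponential rate, is the one genuinely laborious part of the argument.
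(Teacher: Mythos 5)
Your proposal follows essentially the same route as the paper's proof: exchangeability of signed models under the symmetry of the design reduces the problem to the first-moment bound $\prob(A_{S_n}\ne\emptyset)\le f_{k_n-1}(n,2p_n)/\bigl(2^{k_n}\binom{p_n}{k_n}\bigr)$, the sum in \eqref{eq:fk} is then dominated by its $j=\lfloor n/2\rfloor$ term, and Stirling yields an exponential rate that is negative precisely when $\rho>\kappa/(\kappa-\tfrac12)$. One small correction: the monotonicity in $j$ of the fixed-$k$ summands does not follow from $2p_n\ge 2n$ alone (that condition governs the different, $k$-summed terms in Lemma~\ref{lem:total_faces}); the paper establishes it by a ratio computation whose positivity and size require $\kappa>\tfrac12$ and $\rho>\kappa/(\kappa-\tfrac12)$, so the theorem's hypotheses enter at that step as well as in the final entropy inequality you state.
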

\begin{proof}
Since the entries of $X_0$ are iid and symmetric, all the signed models of size $k$ are equally likely to be a face of $\mathcal{CH}(X)$. Thus, the probability that $A_{S_n} \ne \emptyset$ depends only on $k = |S_n|$. This probability is
\begin{align}
P(A_{S_n} \ne \emptyset) &= \frac{\E\left[\text{\# accessible models of size k}\right]}{2^k \binom{p}{k}}\notag \\
                     &\le \frac{f_k(n, 2p)}{2^k \binom{p}{k}}\notag\\
                     &\le \frac{\frac{2p}{2p-k-1}\sum_{j=0}^{\lfloor n/2 \rfloor}\binom{2p-1-j}{k+1-j}\binom{2p - k - 1}{2j - k}}{2^k \binom{p}{k}} \label{eq:sum_bound}
\end{align}
First of all, observe that the terms in the sum are zero unless $j \ge \frac{k}{2}$. Next, we'll show that that $\binom{2p-1-j}{k+1-j}\binom{2p-1-j}{k+1-j}$ is increasing in $j$, and therefore maximized at $j = \lfloor\frac{n}{2}\rfloor$. To see this, define $a_j = \binom{2p-1-j}{k+1-j}\binom{2p-1-j}{k+1-j}$, and consider
\begin{align*}
\frac{a_{j+1}}{a_{j}} & = \frac{(k-j+1)(2p-2j-1)(2p-2j-2)}{(2p-j-1)(2j-k+2)(2j-k+1)} \\
                                  &\ge \frac{(k-n/2+1)(2p-n-1)(2p-n-2)}{(2p-k/2-1)(n-k+2)(n-k+1)}\\
                                  &=\frac{(k_n/n-1/2+1/n)(2p_n/n-1-1/n)(2p_n/n-1-2/n)}{(2p_n/n-k_n/(2n)-1/n)(1-k_n/n+2/n)(1-k_n/n +1/n)}\\
                                   &\ge \frac{(\kappa-1/2-\epsilon)(2\rho -1 -\epsilon)^2}{(2\rho -\kappa /2 + \epsilon)(1-\kappa + \epsilon)^2} \\
                                   &\doteq K(\kappa,\rho, \epsilon).
\end{align*}
If we have $K(\kappa,\rho, 0) > 1$, then we may choose $\epsilon$ small enough such that $K(\kappa,\rho, \epsilon) > 1$ as well, and then the term are increasing in $j$ for sufficiently large $n$. In fact, a sufficient condition for $K(\kappa, \rho, 0) > 1$ is that $\rho > \frac{\kappa}{\kappa - \frac{1}{2}}$, so the terms are in fact increasing.

We can thus bound each of the terms in the numerator of \eqref{eq:sum_bound} by the maximum, yielding the bound
\begin{equation}\label{eq:sum_ub}
\left(\frac{n-k}{2}\right)\binom{2p-1-\frac{n}{2}}{k+1-\frac{n}{2}}\binom{2p-k-1}{n-k}
\end{equation}
for the sum in the numerator of \eqref{eq:sum_bound}. Naturally, we substitute the bound \eqref{eq:sum_ub} in for \eqref{eq:sum_bound}, yielding:

\begin{align}
P(A_{S_n} \ne \emptyset) &\le \frac{\frac{2p}{2p-k-1}\left(\frac{n-k}{2}\right)\binom{2p-1-\frac{n}{2}}{k+1-\frac{n}{2}}\binom{2p-k-1}{n-k}}{2^k \binom{p}{k}}\notag \\
&\le \frac{\frac{2\rho + \epsilon}{2\rho - \kappa - \epsilon}\left(\frac{n(1 - \kappa + \epsilon)}{2}\right)\binom{(2\rho-\frac{1}{2} + \epsilon)n}{(\kappa - \frac{1}{2} + \epsilon)n}\binom{(2\rho-\kappa + \epsilon)n}{(1-\kappa + \epsilon)n}}{2^{(\kappa - \epsilon) n} \binom{(\rho - \epsilon) n}{(\kappa - \epsilon) n}}\label{eq:intermediate},
\end{align}
for any choice of $\epsilon > 0$ and sufficiently large $n$.

The expression above involves terms of the form $\binom{an}{bn}$, which we may approximate using Stirling's formula as follows:
\begin{align}
\binom{an}{bn} &=\left(1 + o(1)\right) \frac{\sqrt{2 \pi an}e^{-an}(an)^{an}}{\sqrt{2 \pi bn}e^{-bn}(bn)^{bn}\sqrt{2 \pi (a-b)n}e^{-(a-b)n}((a-b)n)^{(a-b)n}}\notag \\
               &=\left(1 + o(1)\right) \sqrt{\frac{a}{2\pi b(a-b) n}} \left(\frac{a^a}{b^b (a-b)^{a-b}}\right)^n \label{eq:uncle_stirling}
\end{align}

Substituting the approximation \eqref{eq:uncle_stirling} into each of the three binomial terms in \eqref{eq:intermediate} yields:
\begin{align*}
P(A_S \ne \emptyset) &\le O(\sqrt{n})\frac{\left(\frac{(2\rho-\frac{1}{2}+\epsilon)^{2\rho-\frac{1}{2}+\epsilon}}{(\kappa-\frac{1}{2}+\epsilon)^{\kappa-\frac{1}{2}+\epsilon}(2\rho - \kappa)^{2\rho - \kappa}}\right)^n
\left(\frac{(2\rho-\kappa+\epsilon)^{2\rho-\kappa+\epsilon}}{(1-\kappa+\epsilon)^{1-\kappa+\epsilon}(2\rho - 1)^{2\rho - 1}}\right)^n
}{
\left(2^{\kappa-\epsilon}\frac{(\rho-\epsilon)^{\rho-\epsilon}}{(\kappa-\epsilon)^{\kappa-\epsilon} (\rho - \kappa)^{\rho - \kappa}}\right)^n
}\\
&\doteq O(\sqrt{n})C(\rho, \kappa, \epsilon)^n \\
\end{align*}

Thus, $P(A_S \ne \emptyset) \to 0$ if $C(\rho, \kappa, \epsilon) < 1$. We may choose $\epsilon$ arbitrarily small, and so in fact the same holds if 

\begin{equation*}
C(\rho, \kappa, 0) =
\left(\frac{(2\rho-\frac{1}{2})^{2\rho-\frac{1}{2}}(\frac{\kappa}{2})^{\kappa}(\rho - \kappa)^{\rho - \kappa}
}{
(\kappa-\frac{1}{2})^{\kappa-\frac{1}{2}}(1-\kappa)^{1-\kappa}(2\rho-1)^{2\rho-1}\rho^\rho
} \right)^n < 1.
\end{equation*}

For any $\kappa > 1/2$ there exists a $\rho_0$ such that for $\rho > \rho_0$ we have $C(\rho, \kappa) < 1$. Concretely, $\rho_0 = \frac{\kappa}{\kappa-\frac{1}{2}}$ satisfies this.
\end{proof}

Of course, one might consider it overly stringent to require that the specific true model $S$ be accessible; one might be happy even if some ``similar'' model to the true model were accessible. However, Theorem \ref{thm:inconsistency} combined with a simple union bound shows that this is impossible even for reasonably close models.

\begin{corollary}\label{cor:eps_ball}
Consider the same sequence of lasso problems as in Theorem \ref{thm:inconsistency}. Let $B_\epsilon(S) = \{T \subset \{1, \ldots, 2p\}:\; \left|S \triangle T\right| < \epsilon n\}$. Then for $\rho$ and $\kappa$ such that $\rho > \frac{\kappa}{\kappa - \frac{1}{2}}$, there exists $\epsilon = \epsilon(\rho, \kappa) > 0$ such that
\begin{equation*}
P\left(\left(\bigcup_{T \in B_{\frac{\epsilon}{\log{n}}}(S_n)} A_T\right) \ne \emptyset\right) \to 0.
\end{equation*}
\end{corollary}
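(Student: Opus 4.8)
The plan is to combine Theorem~\ref{thm:inconsistency} with a union bound over the finitely many signed models lying near $S_n$, exploiting the fact that the number of such models grows subexponentially (this is exactly what the $1/\log n$ in the radius buys us) while the probability that any individual nearby model is accessible decays exponentially. Writing $B:=B_{\epsilon/\log n}(S_n)$, I would first apply
\[
P\!\left(\bigcup_{T\in B}A_T\neq\emptyset\right)\le\sum_{T\in B}P(A_T\neq\emptyset),
\]
so that everything reduces to bounding $|B|$ and bounding each summand uniformly.

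For the cardinality, note that $T\mapsto S_n\triangle T$ is a bijection on subsets of $\{1,\dots,2p\}$, so with $r:=\epsilon n/\log n$ the number of $T$ (signed or not) satisfying $|S_n\triangle T|<r$ is at most $\sum_{d<r}\binom{2p}{d}\le r\binom{2p}{r}\le r\left(\tfrac{2pe}{r}\right)^r$. Since $p\sim\rho n$, this yields $\log|B|\le\tfrac{\epsilon n}{\log n}\bigl(\log\log n+O(1)\bigr)=o(n)$, i.e. $|B|=e^{o(n)}$. This is precisely the role of the radius $\epsilon/\log n$ rather than $\epsilon$: it keeps the count subexponential for every fixed $\epsilon$, so that the per-model exponential decay can dominate.

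For the summands, every $T\in B$ has $\bigl||T|-k_n\bigr|\le r$, hence $|T|/n\to\kappa$. By the symmetry argument in the proof of Theorem~\ref{thm:inconsistency}, $P(A_T\neq\emptyset)$ depends only on $|T|$, and the non-asymptotic bound derived there gives $P(A_T\neq\emptyset)\le O(\sqrt{n})\,C(\rho,|T|/n,\epsilon')^n$. Because $\rho>\frac{\kappa}{\kappa-1/2}$ is a strict (open) condition, it persists on a neighborhood of $\kappa$; combining this with the continuity of the rate $C(\rho,\cdot,\cdot)$ and the fact that $|T|/n$ lies within $o(1)$ of $\kappa$ uniformly over $B$, one extracts a single constant $c>0$ with $\sup_{T\in B}P(A_T\neq\emptyset)\le e^{-cn}$ for all large $n$. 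Multiplying the two estimates gives $\sum_{T\in B}P(A_T\neq\emptyset)\le e^{o(n)}e^{-cn}\to 0$.

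The main obstacle is the uniformity in the last step: Theorem~\ref{thm:inconsistency} is an asymptotic statement about a single sequence of models with $|S_n|/n\to\kappa$, whereas here I must control $e^{o(n)}$ many models at once, whose sizes drift across a window of width $r$. The resolution is that all of these sizes satisfy $|T|/n=\kappa+o(1)$ and that $C(\rho,\kappa,0)<1$ is an open condition in $\kappa$, so the exponential rate stays bounded strictly below $1$ across the entire window; choosing $\epsilon=\epsilon(\rho,\kappa)$ small enough that this window stays inside the region where the rate is $<1$ then completes the argument.
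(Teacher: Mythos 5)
Your proposal is correct and follows essentially the same route as the paper: a union bound over $B_{\epsilon/\log n}(S_n)$, a cardinality bound on that ball, and a uniform exponential bound on each $P(A_T\neq\emptyset)$ obtained from the symmetry argument of Theorem~\ref{thm:inconsistency} together with continuity of the rate $C(\rho,\cdot,\cdot)$ near $\kappa$. Your count $|B|\le r\binom{2p}{r}\le r\left(\tfrac{2pe}{r}\right)^r=e^{o(n)}$ is slightly sharper than the paper's $(2p)^{\epsilon n/\log n}\le e^{2\epsilon n}$, which spares you the paper's final balancing of $e^{2\epsilon}$ against $1-\delta$ (and in fact shows the conclusion holds for every $\epsilon>0$), but the structure of the argument is otherwise identical.
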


\begin{proof}
We have that 
\begin{align*}
|B_{\epsilon/\log n}(S_n)| &\le (2p)^{\frac{\epsilon n}{\log(n)}} \\
&= (2\rho n)^{\frac{\epsilon n}{\log(n)}} \\
&= (e^{\log(2\rho n)})^{\frac{\epsilon n}{\log(n)}} \\ 
&= e^{\epsilon \log(2\rho)\frac{n}{\log(n)}}e^{\epsilon n} \\
&\le e^{2\epsilon n}.
\end{align*}

Note that $C(\rho, \kappa, 0)$ is a continuous function, hence for $\rho$ and $\kappa$ such that $C(\rho, \kappa, 0) < 1$, we can choose $n$ large enough such that perturbing $\kappa$ by $\frac{\epsilon}{\log(n)}$ to $\kappa^\prime$ keeps $C(\rho, \kappa^\prime, 0)$ less than $1-\delta$, for small enough $\delta$. Then by a simple union bound we get
\begin{align*}
P\left(\bigcup_{T \in B_{\epsilon / \log n}(S_n)} A_T \ne \emptyset\right) &\le  O(\sqrt{n})e^{2\epsilon n} (1-\delta)^n
\end{align*}
Thus we can choose $\epsilon$ small enough such that $e^{2\epsilon}(1-\delta) <1$, which makes the right hand side go to zero as $n$ goes to infinity.
\end{proof}
\textit{Remark.} It is important to note the difference between a model being accessibe and the same model being selected by lasso. Accessibility is a property of the model and the design matrix. On the other hand, for a model being selected not only do we need it to be accessible but we also need the response vector to lie in a certain subset of the space. This means that being selected is more stringent than accessibility. Our argument for model selection inconsistency is based on non-accessibility of most of large models, and for that reason it is a sub-optimal result. In fact, Corollary 7.1. in \cite{donoho2009counting} implies that we can not improve drastically upon Theorem \ref{thm:inconsistency} to relax the assumption $\kappa >1/2$. Using notation, and in the context, of Theorem \ref{thm:inconsistency} it asserts that we can not assume a lower bound  than $(2e\log(\rho))^{-1}$ on $\kappa$ smaller, when entries of the design matrix are independent Gaussian variables.
\section{Simulations}
This sections shows results from simulation studies of the model selection consistency for lasso. We follow the setup from the previous section assuming that $p_n/n \rightarrow \rho > 1$, and $k_n/n \rightarrow \kappa$. We consider random design matrix with iid entries as well as independent observations from correlated features.

We consider the normalized Hamming distance between the fitted signed model and the true signed model as a measure of mis-selection. In particular, \textit{relative selection error} is defined as $\frac{\|\beta-\hat{\beta}\|_0}{\|\beta\|_0}$, where $\|.\|_0$ is the number of non-zero elements and $\beta$ is a signed model compatible with the notation of \ref{eq:lasso_expanded}. Figure \ref{fig:path} illustrates the selection error along a lasso path.
\begin{figure}[ht]
    \includegraphics[width=\textwidth]{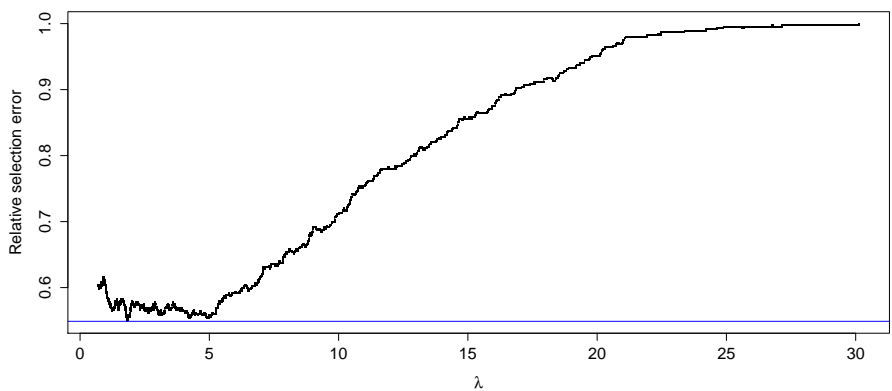}
    \caption{Example of relative selection error along a lasso path. Lasso path is generated using the R package `glmnet', on $n = 1000$ observations of $p = 1200$ variables on a net of $10p$ values of $\lambda$. The design matrix has iid Gaussian entries. The coefficients vector $\beta$ has 450 randomly selected entries equal to 10, and the rest of entries equal to zero. Noise is independent of the design matrix and has standard normal distribution.The blue line shows the minimum selection error achieved by a lasso selected model.}
    \label{fig:path}
\end{figure}

First, we examine the results in a setup where the conditions for our theorems hold. Assume we have $n =100$ observations of $p = 120$ variables, and a response variable. The coefficients vector $\beta$ has $k$ randomly selected entries equal to 10, and the rest of entries equal to zero. $k$ takes values $5,10,20,30,45,60$. For each instance of the model we search over the entire lasso path to find the closest possible lasso model to the true model. We then report the normalized hamming distance between the best lasso model and the true model as the relative selection error. That is $\frac{\|\beta-\hat{\beta}\|_0}{k}$, where $\hat{\beta}$ is the closest lasso model and $\beta$ is the true model. For each $n,p$ and, $k$ we repeat this procedure 1000 time and show the histogram of the relative selection error of the best models selected by lasso. Figure \ref{fig:histbasic} shows these histograms. As it can be seen, for small $k$, i.e. a sparser model, lasso can select a model pretty close to the true model. As $k$ grows, the selection becomes worse. In particular, for $k\sim n/2$ lasso doesn't select a model even close to the true model.

We also try a similar simulation when the assumption of iid entries for the design matrix is relaxed to a more realistic condition. We assume having independent observations of correlated features. Assume $n=100$, $p=120$, and $k= 45$. The design matrix consists of $n$ observations from a $p-$variate normal distribution with mean zero, variance one, and all the pairwise correlations equal to $\rho$. In this study $\rho$ takes on values $0,0.1, 0.3, 0.5, 0.7 ,0.9$. The rest of the setup is similar to that of the previous paragraph. This is illustrated in Figure \ref{fig:histCorr}. It suggests that the selection error becomes slightly worse when the features are correlated.

To summarize, simulation studies are consistent with the theoretical results from previous section. They also suggest that the results of this paper, although stated in an asymptotic
regime, are in agreement with simulations for relatively small values of $n$. We conclude this section with the following remark.

\textit{Remark.} It is worth mentioning that ideally one would try to numerically investigate accessibility of lasso models. This is computationally demanding, if not impossible, because it requires finding the convex hull of feature vectors in $n$ dimensions. State of the art methods for computing the convex hull of $2p$ points in $n$ dimensions require $O((2p)^{\floor*{n/2}})$ operations, which is prohibitive for relatively small $n$ and impossible for real problems of interest. For this reason, we focus on lasso selected models for simulation purposes.
\begin{figure}[h]
    \includegraphics[width=\textwidth]{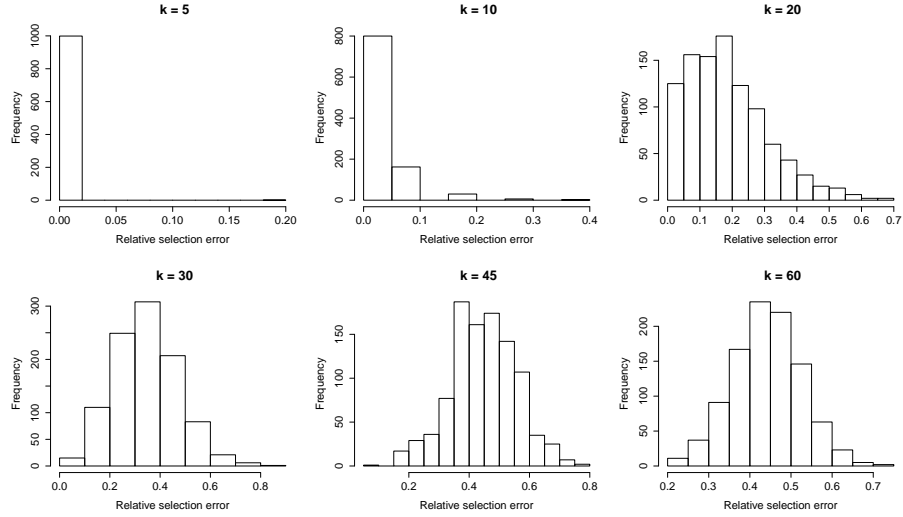}
    \caption{Histogram of relative selection error for $n= 100$, $p=120$, and different sparsity level $k$. Each histogram is generated using 1000 repetitions.}
    \label{fig:histbasic}
\end{figure}

\begin{figure}[h!]
    \includegraphics[width=\textwidth]{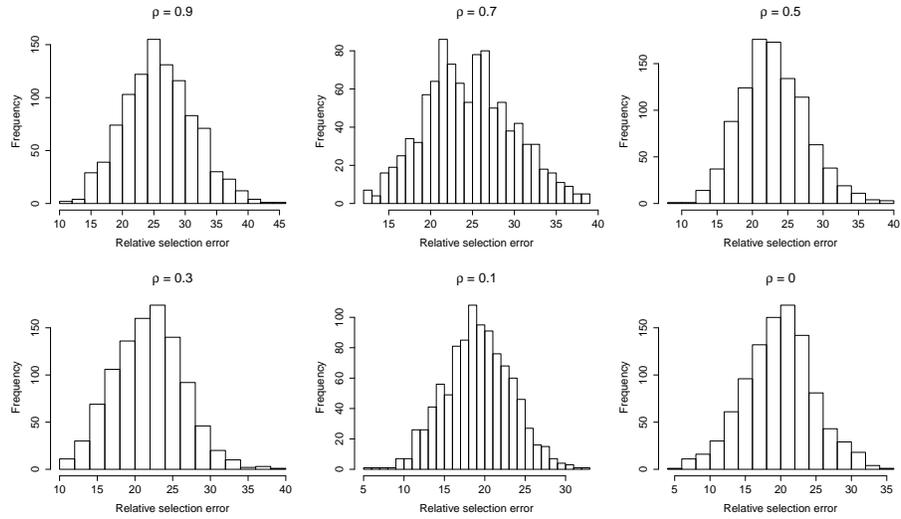}
    \caption{Histogram of relative selection error for $n= 100$, $p=120$, $k=45$, and different correlation parameters $\rho$. Each histogram is generated using 1000 repetitions.}
    \label{fig:histCorr}
\end{figure}

\section{Discussion}
We have characterized the geometry of the partitioning of the space $\mathbb{R}^n$ of the response vector into the regions $\{A_S\}_S$ corresponding to different accessible signed models $S$. We have also showed that these accessible models correspond to faces of the convex hull of the design matrix and its negative counterpart, $\mathcal{CH}(X)$. We then used the upper bound theorem from polytope combinatorics to bound the number of accessible lasso models of each size, and then used this bound to directly prove a model selection inconsistency result.

It is worth mentioning that all of our results hold regardless of the choice of the regularization parameter $\lambda$. The geometric picture of $\mathcal{CH}(X)$ does not depend on $\lambda$, the set of accessible models therefore does not depend on $\lambda$ either, and consequently the model selection inconsistency result holds for any possible choice of $\lambda$, algorithm to choose $\lambda$, or even oracle who says what $\lambda$ ought to be.

Our results show that model selection is impossible when $\kappa > \frac{1}{2}$; not only will the correct model not be chosen, the design actually makes it impossible to do so (with high probability). This nicely complements seminal results on model selection for the lasso, by \cite{zhao2006model}, which have the true model size $k$ growing as $O(n^c)$, with $c < 1$. In this setup, \cite{zhao2006model} showed that an almost necessary and sufficient condition for model selection consistency is that the ``irrepresentability conditions'' hold on the design matrix.

Another set of results, by \cite{wainwright2009sharp}, shows that model selection is impossible in the setup where the true model grows faster than $O(n/\log n)$. We consider our model selection inconsistency theorem a geometric description of these results. These results were later generalized in \cite{wainwright2007information} to get information-theoretic limits on sparsity recovery using a general decoder.

\bibliography{lasso_geometry}

\end{document}